\newtheorem{theorem}{Theorem}
\newtheorem{corollary}[theorem]{Corollary}
\theoremstyle{remark}
\newtheorem{remark}{Remark}
\newtheorem{example}{Example}
\DeclareMathOperator{\co}{co}
\DeclareMathOperator{\Real}{Re}
\begin{document}
\setcounter{page}{1}

\title[An estimate of approximation]{An estimate of approximation\\ of a matrix-valued function\\ by an interpolation polynomial}

\author{V.~G. Kurbatov}
 \address{Department of Mathematical Physics,
Voronezh State University\\ 1, Universi\-tet\-skaya Square, Voronezh 394018, Russia}
\email{\textcolor[rgb]{0.00,0.00,0.84}{kv51@inbox.ru}}

\author{I.~V. Kurbatova}
 \address{Department of Software Development and Information Systems Administration,
Vo\-ro\-nezh State University\\ 1, Universitetskaya Square, Voronezh 394018, Russia}
\email{\textcolor[rgb]{0.00,0.00,0.84}{la\_soleil@bk.ru}}

\subjclass{Primary 65F60; Secondary 97N50}

\keywords{function of a matrix, interpolation polynomial, estimate}

\date{\today}

\begin{abstract}
Let $A$ be a square complex matrix, $z_1$, \dots, $z_{n}\in\mathbb C$~be {\rm(}possibly repetitive{\rm)} points of interpolation, $f$ be a function analytic in a neighborhood of the convex hull of the union of the spectrum of $A$ and the points $z_1$, \dots, $z_{n}$, and $p$~be the interpolation polynomial of $f$ constructed by the points $z_1$, \dots, $z_{n}$. It is proved that under these assumptions
\begin{equation*}
\lVert f(A)-p(A)\rVert\le\frac1{n!}
\max_{\substack{t\in[0,1]\\\mu\in\co\{z_1,z_{2},\dots,z_{n}\}}}\bigl\lVert\Omega(A)f^{{(n)}}
\bigl((1-t)\mu\mathbf1+tA\bigr)\bigr\rVert,
\end{equation*}
where $\Omega(z)=\prod_{k=1}^n(z-z_k)$ and the symbol $\co$ means the convex hull.
\end{abstract}

\maketitle

\section*{Introduction}\label{s:Introduction}
An approximate calculation of analytic functions of matrices~\cite{Frommer-Simoncini08a,Higham08} arises in many applications.
One of the often used methods for the approximate calculation of a function $f$ of a large matrix $A$ is the replacement of $f$ by its polynomial approximation $p$. For the approximation by the Taylor polynomial, it is known a good estimate of accuracy~\cite{Mathias93a}, see Corollary~\ref{c:Mathias}
.
In this paper, we propose an estimate of the norm $\lVert f(A)-p(A)\rVert$, which is a generalization of the estimate from~\cite{Mathias93a} for the case when $p$ is an interpolation polynomial of $f$. This estimate may help to choose an interpolation polynomial for an approximate calculation of a matrix function in an optimal way. Our estimate can be considered as a matrix analogue of the estimate~\cite[Theorem 3.1.1]{Davis_PhJ}
\begin{equation*}
|f(z)-p(z)|\le\frac{1}{n!}|\Omega(z)|\max_{\lambda\in\co\{z_1,z_{2},\dots,z_{n},\,z\}}|f^{(n)}(\lambda)|,
\end{equation*}
where
\begin{equation*}
\Omega(z)=\prod_{k=1}^n(z-z_k),
\end{equation*}
for the difference between an analytic function $f$ and its interpolation polynomial $p$ with respect to the points of interpolation $z_1$, \dots, $z_{n}$ provided that $f$ is analytic in a neighborhood of the convex hull of the points $z_1$, \dots, $z_{n}$ and $z$.

It is known many estimates of $\lVert f(A)\rVert$, see, e.g.,~\cite{Crouzeix04,Gil-LMA-93,Gil-MPAG-08,Gil-ASM-14,Greenbaum04,Kagstrom77,Van_Loan75:SME,Van_Loan77,Young81}. All of them can be equivalently written as estimates of $\lVert f(A)-p(A)\rVert$, see, e.g.,~\cite[Theorem 11.2.2]{Golub-Van_Loan96:eng}.
The difference between these estimates and the proposed one (Theorem~\ref{t:Mathias2}) is that the latter is adapted for the approximation by an interpolation polynomial.

In Section~\ref{s:estimate}, we prove our estimate (Theorem~\ref{t:Mathias2}) and describe some of its variants for the case of the matrix exponent. In Section~\ref{s:experiment}, we give a numerical application.

\section{The estimate}\label{s:estimate}
\begin{theorem}\label{t:Mathias2}
Let $A$ be a square complex matrix, $z_1$, \dots, $z_{n}\in\mathbb C$~be arbitrary {\rm(}possibly repetitive{\rm)} points of interpolation, $f$ be an analytic function defined in a neighborhood of the convex hull of the union of the spectrum of $A$ and the points $z_1$, \dots, $z_{n}$, and $p$~be the interpolation polynomial of $f$ constructed by the points $z_1$, \dots, $z_{n}$ {\rm(}taking into account their multiplicities{\rm)}. Then {\rm(}for any norm on the space of matrices{\rm)}
\begin{equation*}
\lVert f(A)-p(A)\rVert\le\frac1{n!}
\max_{\substack{t\in[0,1]\\\mu\in\co\{z_1,z_{2},\dots,z_{n}\}}}\bigl\lVert\Omega(A)f^{{(n)}}
\bigl((1-t)\mu\mathbf1+tA\bigr)\bigr\rVert,
\end{equation*}
where $\mathbf1$ is the identity matrix, the symbol $\co$ means the convex hull, and
\begin{equation*}
\Omega(z)=\prod_{k=1}^n(z-z_k).
\end{equation*}
\end{theorem}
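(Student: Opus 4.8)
The plan is to start from the classical Hermite integral representation for the remainder of an interpolation polynomial. For a scalar analytic function $f$, the divided-difference/remainder formula gives
\begin{equation*}
f(z)-p(z)=\frac{1}{n!}\Omega(z)\,g_n(z),
\end{equation*}
where the "$n$-th order divided difference along the simplex" admits the Genocchi–Hermite integral
\begin{equation*}
g_n(z)=n!\int_{S_n} f^{(n)}\bigl(z_1+s_1(z_2-z_1)+\dots+s_{n-1}(z_n-z_{n-1})+s_n(z-z_n)\bigr)\,ds,
\end{equation*}
the integral being over the standard simplex $S_n=\{s:\ s_i\ge0,\ \sum s_i\le1\}$. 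A cleaner route, and the one I would actually carry out, is to iterate the single-variable identity $f(z)-f(w)=(z-w)\int_0^1 f'((1-t)w+tz)\,dt$: peeling off one interpolation node at a time expresses $f(z)-p(z)$ as an $n$-fold iterated integral over $[0,1]^n$ whose integrand is $f^{(n)}$ evaluated at a point of the form $(1-t)\mu\mathbf1+tz$ with $\mu$ in the convex hull of $z_1,\dots,z_n$ and $t\in[0,1]$, multiplied by $\Omega(z)$ and a probability density on the parameter domain.

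Next I would pass from scalars to matrices. Here the key tool is the holomorphic functional calculus: since $f$ is analytic in a neighborhood $U$ of $\co(\sigma(A)\cup\{z_1,\dots,z_n\})$, I can write $f(A)=\frac{1}{2\pi i}\oint_\Gamma f(\zeta)(\zeta\mathbf1-A)^{-1}\,d\zeta$ for a contour $\Gamma\subset U$ surrounding $\sigma(A)$, and likewise for $p$. Because the scalar identity $f(\zeta)-p(\zeta)=\frac{1}{n!}\Omega(\zeta)\int\!\!\dots\!\!\int f^{(n)}((1-t)\mu\mathbf1+t\zeta)\,d(\text{density})$ holds pointwise on $\Gamma$ (note each point $(1-t)\mu+t\zeta$ lies in the convex hull of $\{z_1,\dots,z_n,\zeta\}\subset U$, so $f^{(n)}$ is defined there), I substitute it under the contour integral, interchange the contour integral with the finite-dimensional parameter integral (justified by continuity/compactness — everything is jointly continuous and the domain compact), and recognize $\frac{1}{2\pi i}\oint_\Gamma \Omega(\zeta)f^{(n)}((1-t)\mu\mathbf1+t\zeta)(\zeta\mathbf1-A)^{-1}\,d\zeta$ as $\Omega(A)f^{(n)}((1-t)\mu\mathbf1+tA)$ by the functional calculus — using that $\Omega$ is a polynomial so $\Omega(A)$ commutes through, and that $\zeta\mapsto f^{(n)}((1-t)\mu+t\zeta)$ is analytic on $U$ for fixed $t,\mu$. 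This yields the matrix identity
\begin{equation*}
f(A)-p(A)=\frac{1}{n!}\int\!\!\dots\!\!\int \Omega(A)f^{(n)}\bigl((1-t)\mu\mathbf1+tA\bigr)\,d(\text{probability measure}),
\end{equation*}
where the measure is supported on $t\in[0,1]$ and the simplex parametrizing $\mu\in\co\{z_1,\dots,z_n\}$.

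Finally I would take norms: since the representing measure is a probability measure, the triangle inequality for (vector-valued) integrals gives $\lVert f(A)-p(A)\rVert\le\frac{1}{n!}\max\lVert \Omega(A)f^{(n)}((1-t)\mu\mathbf1+tA)\rVert$, the maximum over the compact set $t\in[0,1]$, $\mu\in\co\{z_1,\dots,z_n\}$, which is exactly the claimed bound; note this works for any norm, not just one induced by the functional calculus, because it is just $\lVert\int g\,d\nu\rVert\le\sup\lVert g\rVert$ for a probability measure $\nu$.

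I expect the main obstacle to be the bookkeeping in the scalar step: carefully carrying out the iterated peeling so that after $n$ steps the argument of $f^{(n)}$ has the precise affine form $(1-t)\mu\mathbf1+tz$ with $\mu\in\co\{z_1,\dots,z_n\}$ — rather than the more symmetric but less convenient $\sum s_i z_i + s_n z$ form — and verifying that the resulting density on the $(t,\mu)$-parameters is nonnegative with total mass one. A secondary technical point is justifying the interchange of the contour integral and the parameter integral together with the identification of the contour integral as $\Omega(A)f^{(n)}(\dots)$; this is routine given joint continuity and compactness, but should be stated. Everything else — the functional calculus facts and the final norm estimate — is standard.
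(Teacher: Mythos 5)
Your proposal is correct and follows essentially the same route as the paper's proof: the divided-difference remainder formula $f(z)-p(z)=\Omega(z)f[z_1,\dots,z_n,z]$ combined with the Genocchi--Hermite simplex integral for the divided difference, substitution of $A$ for $z$, and a norm bound in which the factor $\frac1{n!}$ arises as the volume of the simplex. The only differences are cosmetic: you justify the scalar-to-matrix passage explicitly via the Riesz--Dunford contour integral where the paper simply asserts it, and you bound the norm of the integral directly as an integral against a probability measure where the paper invokes a Hahn--Banach functional of unit norm; both devices are standard and equivalent.
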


\begin{proof}
It is well-known (see, e.g.,~\cite[Theorem 3.4.1]{Davis_PhJ} or~\cite[formula (52)]{Gelfond:eng1}) that
\begin{equation*}
f(z)-p(z)=\Omega(z)f[z_1,z_{2},\dots,z_{n},z],
\end{equation*}
where $f[z_1,z_{2},\dots,z_{n},z]$ is the divided difference~\cite{Davis_PhJ,Gelfond:eng1,Jordan}.
On the other hand, by~\cite[formula (47)]{Gelfond:eng1}, we have
\begin{multline}\label{e:formula (47)}
f[z_1,z_{2},\dots,z_{n},z]=\int_0^1\int_0^{t_1}\dots\int_0^{t_{n-1}}f^{(n)}
\bigl(z_1+(z_2-z_1)t_1+\dots\\
+(z_{n}-z_{n-1})t_{n-1}+(z-z_{n})t_{n}\bigr)\,dt_{n}dt_{n-1}\dots dt_1.
\end{multline}
Or
\begin{multline*}
f[z_1,z_{2},\dots,z_{n},z]=\int_0^1\int_0^{t_1}\dots\int_0^{t_{n-1}}f^{{(n)}}
\bigl((1-t_1)z_1+(t_1-t_2)z_2+\dots\\
+(t_{n-1}-t_{n})z_{n}+t_{n}z\bigr)\,dt_{n}dt_{n-1}\dots dt_1.
\end{multline*}
Clearly, the complex numbers
\begin{equation*}
(1-t_1)z_1+(t_1-t_2)z_2+\dots+(t_{n-1}-t_{n})z_{n}+t_{n}z
\end{equation*}
form the convex hull of the set $\{z_1,z_{2},\dots,z_{n},z\}$ when $t_1,\dots,t_n$ run through the set specified by the inequalities $0\le t_{n}\le\dots\le t_1\le1$. Thus, considering integral~\eqref{e:formula (47)}, we use the fact that $f^{(n)}$ is defined on the convex hull of the points $z_1$, \dots, $z_{n}$, and $z$.

Substituting $A$ for $z$ into the previous formulas (thus, we assume that any point of the spectrum of $A$ can be taken as $z$, which can be done, since $f$ is analytic in a neighborhood of the convex hull of the union of the spectrum of $A$ and the points $z_1$, \dots, $z_{n}$), we obtain
\begin{multline*}
f(A)-p(A)=\Omega(A)\int_0^1\int_0^{t_1}\dots\int_0^{t_{n-1}}f^{{(n)}}
\bigl((1-t_1)z_1\mathbf1+(t_1-t_2)z_2\mathbf1+\dots\\
+(t_{n-1}-t_{n})z_{n}\mathbf1+t_{n}A\bigr)\,dt_{n}dt_{n-1}\dots dt_1.
\end{multline*}

Let $\xi$~be a linear functional on the space of matrices (equipped by an arbitrary norm) such that $\lVert\xi\rVert=1$ and
\begin{equation*}
\lVert f(A)-p(A)\rVert=\xi\bigl(f(A)-p(A)\bigr).
\end{equation*}
Such a functional exists by the Hahn-Banach theorem~\cite[Theorem 2.7.4]{Hille-Phillips:eng}. Then we have the estimate
\begin{equation}\label{e:last est}
\begin{split}
\lVert f(A)-p(A)\rVert
&=\xi\biggl(\int_0^1\int_0^{t_1}\dots\int_0^{t_{n-1}}\Omega(A)f^{{(n)}}
\bigl((1-t_1)z_1\mathbf1+\dots\\
&\phantom{fghfg}+(t_{n-1}-t_{n})z_{n}\mathbf1+t_{n}A\bigr)\,dt_{n}dt_{n-1}\dots dt_1\biggr)\\
&\le\biggl\lVert \int_0^1\int_0^{t_1}\dots\int_0^{t_{n-1}}\Omega(A)f^{{(n)}}
\bigl((1-t_1)z_1\mathbf1+\dots\\
&\phantom{fghfg}+(t_{n-1}-t_{n})z_{n}\mathbf1+t_{n}A\bigr)\,dt_{n}dt_{n-1}\dots dt_1\biggr\rVert\\
&\le\int_0^1\int_0^{t_1}\dots\int_0^{t_{n-1}}\bigl\lVert\Omega(A)f^{{(n)}}
\bigl((1-t_1)z_1\mathbf1+\dots\\
&\phantom{fghfg}+(t_{n-1}-t_{n})z_{n}\mathbf1+t_{n}A\bigr)\bigr\rVert\,dt_{n}dt_{n-1}\dots dt_1\\
&\le\int_0^1\int_0^{t_1}\dots\int_0^{t_{n-1}}
\max_{0\le t_{n}\le\dots\le t_1\le1}\bigl\lVert\Omega(A)f^{{(n)}}
\bigl((1-t_1)z_1\mathbf1+\dots\\
&\phantom{fghfg}+(t_{n-1}-t_{n})z_{n}\mathbf1+t_{n}A\bigr)\bigr\rVert\,dt_{n}dt_{n-1}\dots dt_1.
\end{split}
\end{equation}

We observe that the complex numbers
\begin{equation*}
\frac1{1-t_{n}}\bigl((1-t_1)z_1+(t_1-t_2)z_2+\dots+(t_{n-1}-t_{n})z_{n}\bigr)
\end{equation*}
form the convex hull of $\{z_1,z_{2},\dots,z_{n}\}$ when $t_1,\dots,t_n$ run through the set specified by the inequalities $0\le t_{n}\le\dots\le t_1\le1$. Besides,
\begin{equation*}
\int_0^1\int_0^{t_1}\dots\int_0^{t_{n-1}}\,dt_{n}\dots dt_1=\frac1{n!}.
\end{equation*}
Therefore from estimate~\eqref{e:last est} it follows that
\begin{equation*}
\lVert f(A)-p(A)\rVert\le\frac1{n!}
\max_{\substack{t\in[0,1]\\\mu\in\co\{z_1,z_{2},\dots,z_{n}\}}}\bigl\lVert\Omega(A)f^{{(n)}}
\bigl((1-t)\mu\mathbf1+tA\bigr)\bigr\rVert.\qed
\end{equation*}
\renewcommand\qed{}
\end{proof}

\begin{remark}\label{r:d co}
For numerical calculations, it may be useful to note that the maximum can be taken over the boundary $\partial\co$ of a convex hull instead of the whole convex hull:
\begin{multline*}
\max_{\mu\in\co\{z_1,z_{2},\dots,z_{n}\}}\bigl\lVert\Omega(A)f^{{(n)}}
\bigl((1-t)\mu\mathbf1+tA\bigr)\bigr\rVert\\
=\max_{\mu\in\partial\co\{z_1,z_{2},\dots,z_{n}\}}\bigl\lVert\Omega(A)f^{{(n)}}
\bigl((1-t)\mu\mathbf1+tA\bigr)\bigr\rVert.
\end{multline*}
Indeed, by the Hahn-Banach theorem,
\begin{equation*}
\bigl\lVert\Omega(A)f^{{(n)}}
\bigl((1-t)\mu\mathbf1+tA\bigr)\bigr\rVert=\max_{\lVert \xi\rVert\le1}
\xi\bigl[\Omega(A)f^{{(n)}}\bigl((1-t)\mu\mathbf1+tA\bigr)\bigr],
\end{equation*}
where the functional $\xi$ runs over the unit ball of the dual space of the space of all matrices. The function
\begin{equation*}
\xi\mapsto\xi\bigl[\Omega(A)f^{{(n)}}\bigl((1-t)\mu\mathbf1+tA\bigr)\bigr]
\end{equation*}
is analytic. Therefore, by the maximum modulus principle,
\begin{multline*}
\max_{\mu\in\co\{z_1,z_{2},\dots,z_{n}\}}\bigl|\xi\bigl[\Omega(A)f^{{(n)}}\bigl((1-t)\mu\mathbf1+tA\bigr)\bigr]
\bigr|\\
=\max_{\mu\in\partial\co\{z_1,z_{2},\dots,z_{n}\}}\xi\bigl[\Omega(A)f^{{(n)}}\bigl((1-t)\mu\mathbf1+tA\bigr)\bigr].
\end{multline*}
Taking maximum over all functionals $\xi$ of unit norm, we arrive at the equality being proved.
\end{remark}

Our Theorem~\ref{t:Mathias2} was inspired by the following result.

\begin{corollary}[{\rm\cite[Corollary 2]{Mathias93a},~\cite[Theorem 4.8]{Higham08}}]\label{c:Mathias}
Let the Taylor series
\begin{equation*}
f(\lambda)=\sum_{k=0}^\infty c_k(\lambda-z_1)^k,
\end{equation*}
where $c_k\in\mathbb C$,
converges on an open circle of radius $r$ with the center at $z_1$, and the spectrum of a square matrix $A$ is contained in this circle. Then
\begin{equation*}
\biggl\Vert f(A)-\sum_{k=0}^{n-1}c_k(A-z_1\mathbf1)^k\biggr\Vert\le
\frac1{n!}\max_{t\in[0,1]}\bigl\lVert (A-z_1\mathbf1)^n f^{(n)}\bigl((1-t)z_1\mathbf1+tA\bigr)\bigr\rVert.
\end{equation*}
\end{corollary}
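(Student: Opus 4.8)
The plan is to deduce Corollary~\ref{c:Mathias} from Theorem~\ref{t:Mathias2} by specializing to the confluent case in which all $n$ interpolation points coincide with $z_1$. First I would recall that the Hermite interpolation polynomial of $f$ at the single node $z_1$ taken with multiplicity $n$ is exactly the truncated Taylor polynomial $p(\lambda)=\sum_{k=0}^{n-1}c_k(\lambda-z_1)^k$: this $p$ has degree $\le n-1$ and satisfies $p^{(j)}(z_1)=f^{(j)}(z_1)$ for $j=0,1,\dots,n-1$, which is precisely the interpolation condition with the node $z_1$ repeated $n$ times. Correspondingly $\Omega(z)=\prod_{k=1}^n(z-z_k)=(z-z_1)^n$, so $\Omega(A)=(A-z_1\mathbf1)^n$.

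Next I would verify the hypotheses of Theorem~\ref{t:Mathias2}. The open disk of radius $r$ centered at $z_1$ is convex and, by assumption, contains both the spectrum of $A$ and the point $z_1$; hence it contains the convex hull of the union of the spectrum of $A$ with $\{z_1,z_2,\dots,z_n\}=\{z_1\}$. Since $f$ is the sum of a power series converging on that disk, it is analytic there, and therefore in a neighborhood of the relevant convex hull. Thus Theorem~\ref{t:Mathias2} is applicable.

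Finally I would simplify the resulting estimate. Because $z_1=z_2=\dots=z_n$, the convex hull $\co\{z_1,z_2,\dots,z_n\}$ is the single point $\{z_1\}$, so the only admissible $\mu$ is $z_1$ and the maximum over $\mu$ collapses; the argument $(1-t)\mu\mathbf1+tA$ becomes $(1-t)z_1\mathbf1+tA$. The conclusion of Theorem~\ref{t:Mathias2} then reads
\begin{equation*}
\lVert f(A)-p(A)\rVert\le\frac1{n!}\max_{t\in[0,1]}\bigl\lVert(A-z_1\mathbf1)^n f^{(n)}\bigl((1-t)z_1\mathbf1+tA\bigr)\bigr\rVert,
\end{equation*}
which is exactly the asserted inequality.

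I do not expect a genuine obstacle here: the entire content is the correct identification of the Taylor polynomial as a confluent interpolation polynomial and the routine observation that convexity of the disk makes the analyticity hypothesis of Theorem~\ref{t:Mathias2} automatic. The only point requiring a little care is the multiplicity bookkeeping in Theorem~\ref{t:Mathias2} (the phrase ``taking into account their multiplicities''), which must be read so that $p$ is precisely the degree-$(n-1)$ Taylor polynomial and not a polynomial of lower degree.
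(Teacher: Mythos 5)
Your derivation is correct: the paper states this result without an explicit proof (citing Mathias and Higham), but it presents it as a corollary of Theorem~\ref{t:Mathias2}, and your specialization to the confluent case $z_1=z_2=\dots=z_n$ --- identifying the Hermite interpolant at an $n$-fold node with the truncated Taylor polynomial, taking $\Omega(A)=(A-z_1\mathbf1)^n$, and collapsing the maximum over $\mu\in\co\{z_1\}$ --- is exactly the intended argument. The hypothesis check (the open disk is convex and contains the compact convex hull of $\sigma(A)\cup\{z_1\}$, so it is a neighborhood of it on which $f$ is analytic) is also handled properly.
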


In corollaries below, we simplify the estimate from Theorem~\ref{t:Mathias2} for the case of the most important function $f(z)=e^z$.

In notation of Theorem~\ref{t:Mathias2}, we set
\begin{align*}
\alpha&=\max\{\,\Real\lambda:\,\lambda\in\sigma(A)\,\}, \\
\beta&=\max_k\Real z_k,\\
\gamma&=\max\{\alpha,\beta\}.
\end{align*}

\begin{corollary}\label{c:Mathias3}
Let the assumptions of Theorem~\ref{t:Mathias2} be fulfilled and $f(z)=e^z$. Then
\begin{equation*}
\lVert e^A-p(A)\rVert\le\frac1{n!}
\max_{t\in[0,1]}e^{(1-t)\beta}\lVert\Omega(A) e^{tA}\rVert.
\end{equation*}
\end{corollary}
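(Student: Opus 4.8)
The plan is to specialize Theorem~\ref{t:Mathias2} to $f(z)=e^z$ and then simplify the matrix exponential appearing on the right-hand side. Since $f^{(n)}(z)=e^z$ for every $n$, Theorem~\ref{t:Mathias2} gives immediately
\[
\lVert e^A-p(A)\rVert\le\frac1{n!}\max_{\substack{t\in[0,1]\\\mu\in\co\{z_1,z_{2},\dots,z_{n}\}}}\bigl\lVert\Omega(A)\,e^{(1-t)\mu\mathbf1+tA}\bigr\rVert .
\]
The key observation is that the scalar matrix $(1-t)\mu\mathbf1$ commutes with $tA$, so the exponential factors as $e^{(1-t)\mu\mathbf1+tA}=e^{(1-t)\mu}e^{tA}$, and hence
\[
\bigl\lVert\Omega(A)\,e^{(1-t)\mu\mathbf1+tA}\bigr\rVert=\bigl|e^{(1-t)\mu}\bigr|\,\lVert\Omega(A)e^{tA}\rVert=e^{(1-t)\Real\mu}\,\lVert\Omega(A)e^{tA}\rVert .
\]

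Next I would carry out the maximization over $\mu$. For fixed $t\in[0,1]$ the factor $\lVert\Omega(A)e^{tA}\rVert$ does not depend on $\mu$, so it remains to maximize $e^{(1-t)\Real\mu}$ over $\mu\in\co\{z_1,z_{2},\dots,z_{n}\}$. Since $1-t\ge0$ and $x\mapsto e^{(1-t)x}$ is nondecreasing, this amounts to maximizing $\Real\mu$. The map $\mu\mapsto\Real\mu$ is real-linear, so it attains its maximum over the convex hull $\co\{z_1,z_{2},\dots,z_{n}\}$ at one of the extreme points $z_1,\dots,z_{n}$, and that maximum equals $\max_k\Real z_k=\beta$. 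Therefore
\[
\max_{\mu\in\co\{z_1,z_{2},\dots,z_{n}\}}e^{(1-t)\Real\mu}=e^{(1-t)\beta},
\]
and substituting this back yields
\[
\lVert e^A-p(A)\rVert\le\frac1{n!}\max_{t\in[0,1]}e^{(1-t)\beta}\,\lVert\Omega(A)e^{tA}\rVert,
\]
which is the asserted estimate.

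There is essentially no serious obstacle here: the argument is a direct computation built on top of Theorem~\ref{t:Mathias2}, the only two points that deserve a word of justification being the factorization of the exponential (valid because $\mu\mathbf1$ is a scalar multiple of the identity and hence commutes with $A$) and the elementary fact that the linear functional $\Real$ attains its maximum over a convex hull at a vertex. One could alternatively invoke Remark~\ref{r:d co} to replace the maximum over $\co\{z_1,z_{2},\dots,z_{n}\}$ by the maximum over its boundary, but this is not needed for the present simplification.
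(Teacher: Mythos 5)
Your proof is correct and follows essentially the same route as the paper's: specialize Theorem~\ref{t:Mathias2} with $f^{(n)}(z)=e^z$, factor $e^{(1-t)\mu\mathbf1+tA}=e^{(1-t)\mu}e^{tA}$ using commutativity with the scalar matrix, and observe that $\max_{\mu\in\co\{z_1,\dots,z_n\}}|e^{(1-t)\mu}|=e^{(1-t)\beta}$. The only difference is that you spell out the justification of this last maximization (real-linearity of $\Real$ and monotonicity of the exponential), which the paper leaves as an observation.
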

\begin{proof}
Clearly, $f^{(n)}(z)=e^z$. Therefore
\begin{equation*}
f^{{(n)}}\bigl((1-t)\mu\mathbf1+tA\bigr)=e^{(1-t)\mu}e^{tA}.
\end{equation*}
It remains to observe that
\begin{equation*}
\max_{\mu\in\co\{z_1,z_{2},\dots,z_{n}\}}|e^{(1-t)\mu}|=e^{(1-t)\beta}.\qed
\end{equation*}
\renewcommand\qed{}
\end{proof}

The following three corollaries are more effective (but rougher) versions of the previous one.

We denote by $\lVert\cdot\rVert_{2\to2}$ the matrix norm induced by the Euclidian norm on $\mathbb C^n$.

\begin{corollary}\label{c:Mathias4}
Let the assumptions of Theorem~\ref{t:Mathias2} be fulfilled and $f(z)=e^z$. Then
\begin{equation*}
\lVert e^A-p(A)\rVert_{2\to2}\le e^{\gamma}\frac{\lVert\Omega(A)\rVert_{2\to2}}{n!}
\sum_{j=0}^{n-1}\frac{(2\Vert A\Vert_{2\to2})^j}{j!}.
\end{equation*}
where the matrix $A$ has the size $n\times n$.
\end{corollary}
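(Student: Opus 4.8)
The plan is to reduce to the scalar‑multiplicative estimate already available in Corollary~\ref{c:Mathias3} and then supply a sufficiently sharp bound on $\lVert e^{tA}\rVert_{2\to2}$. Starting from Corollary~\ref{c:Mathias3} and using submultiplicativity of the induced norm,
\[
\lVert e^A-p(A)\rVert_{2\to2}\le\frac1{n!}\max_{t\in[0,1]}e^{(1-t)\beta}\,\lVert\Omega(A)\rVert_{2\to2}\,\lVert e^{tA}\rVert_{2\to2},
\]
so everything hinges on estimating $\lVert e^{tA}\rVert_{2\to2}$ for $t\in[0,1]$ by a \emph{truncated} exponential sum, namely $e^{t\alpha}\sum_{k=0}^{n-1}(2t\lVert A\rVert_{2\to2})^k/k!$. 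The truncation at $k=n-1$ is precisely what converts the infinite series into the finite one in the statement, and it must come from $A$ being $n\times n$.

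For this auxiliary estimate I would use a Schur decomposition $A=U(D+N)U^{*}$ with $U$ unitary, $D=\operatorname{diag}(\lambda_1,\dots,\lambda_n)$ the diagonal of eigenvalues, and $N$ strictly upper triangular (hence $N^{n}=0$). By unitary invariance $\lVert e^{tA}\rVert_{2\to2}=\lVert e^{t(D+N)}\rVert_{2\to2}$. Iterating the variation‑of‑constants identity $e^{t(D+N)}=e^{tD}+\int_0^t e^{(t-s)D}Ne^{s(D+N)}\,ds$ gives
\[
e^{t(D+N)}=\sum_{k\ge0}\int_{0\le s_k\le\cdots\le s_1\le t}e^{(t-s_1)D}Ne^{(s_1-s_2)D}N\cdots Ne^{s_kD}\,ds_k\cdots ds_1,
\]
and the sum terminates at $k=n-1$ because any product of $n$ copies of $N$ with upper‑triangular matrices interspersed is zero (multiplying $N$ on either side by an upper‑triangular matrix keeps it strictly upper triangular, and the successive widths add up). In the $k$‑th term the diagonal exponentials telescope in norm: $\lVert e^{aD}\rVert_{2\to2}=e^{a\alpha}$ for $a\ge0$ and $(t-s_1)+(s_1-s_2)+\dots+s_k=t$, producing a factor $e^{t\alpha}$; the $k$‑simplex has volume $t^k/k!$; and $\lVert N\rVert_{2\to2}\le\lVert D+N\rVert_{2\to2}+\lVert D\rVert_{2\to2}=\lVert A\rVert_{2\to2}+\rho(A)\le 2\lVert A\rVert_{2\to2}$. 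Summing the terms yields $\lVert e^{tA}\rVert_{2\to2}\le e^{t\alpha}\sum_{k=0}^{n-1}(2t\lVert A\rVert_{2\to2})^k/k!$.

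Finally I would substitute this into the first display, bound $e^{(1-t)\beta}e^{t\alpha}=e^{(1-t)\beta+t\alpha}\le e^{\max\{\alpha,\beta\}}=e^{\gamma}$ since the exponent is a convex combination of $\alpha$ and $\beta$, and use $t^k\le1$ for $t\in[0,1]$ to discard the powers of $t$; this gives exactly $\lVert e^A-p(A)\rVert_{2\to2}\le e^{\gamma}\,\lVert\Omega(A)\rVert_{2\to2}\,(n!)^{-1}\sum_{j=0}^{n-1}(2\lVert A\rVert_{2\to2})^j/j!$. The main obstacle is the auxiliary bound on $\lVert e^{tA}\rVert_{2\to2}$: the technical care is in arranging the Duhamel iteration so that the diagonal factors telescope to the clean $e^{t\alpha}$ rather than to an expression involving $\min_i\Real\lambda_i$, and in verifying rigorously that the nilpotency of $N$, together with its interaction with upper‑triangular matrices, truncates the series at order $n-1$.
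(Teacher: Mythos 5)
Your proposal is correct and follows the paper's proof exactly: reduce to Corollary~\ref{c:Mathias3} via submultiplicativity and then invoke the bound $\lVert e^{tA}\rVert_{2\to2}\le e^{\alpha t}\sum_{j=0}^{n-1}(2t\lVert A\rVert_{2\to2})^j/j!$. The only difference is that the paper simply cites this auxiliary estimate (from Bylov--Vinograd and Gel$'$fand--Shilov), whereas you supply a correct self-contained derivation of it via the Schur form and the terminating Duhamel series; the final step $e^{(1-t)\beta+t\alpha}\le e^{\gamma}$, left implicit in the paper, is also handled properly.
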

\begin{proof}
From Corollary~\ref{c:Mathias3} it follows that
\begin{equation*}
\lVert e^A-p(A)\rVert_{2\to2}\le\frac{\lVert\Omega(A)\rVert_{2\to2}}{n!}
\max_{t\in[0,1]}e^{(1-t)\beta}\lVert e^{tA}\rVert_{2\to2}.
\end{equation*}
Next, we make use of the estimate~\cite[p.~131, Lemma 10.2.1]{Bylov-Vinograd:rus-in-eng}, \cite[p.~68, formula (13)]{Gelfand-Shilov-GF3:eng}
\begin{equation*}
\Vert e^{At}\Vert_{2\to2}\le e^{\alpha t}\sum_{j=0}^{n-1}\frac{(2t\Vert A\Vert_{2\to2})^j}{j!},\qquad t\ge0.\qed
\end{equation*}
\renewcommand\qed{}
\end{proof}

\begin{corollary}\label{c:Mathias6}
Let the assumptions of Theorem~\ref{t:Mathias2} be fulfilled and $f(z)=e^z$.
Let the matrix $A$ be represented in the triangular Schur form~\cite{Golub-Van_Loan96:eng} $A=Q^{-1}BQ$, where $B$ is triangular and $Q$ is unitary. Further, let $B=D+N$, where $D$ is diagonal and $N$ is strictly triangular.
Then
\begin{equation*}
\lVert e^A-p(A)\rVert_{2\to2}\le e^{\gamma}\frac{\lVert\Omega(A)\rVert_{2\to2}}{n!}
\sum_{j=0}^{n-1}\frac{\lVert N\rVert_{2\to2}^j}{j!},
\end{equation*}
where the matrix $A$ has the size $n\times n$.
\end{corollary}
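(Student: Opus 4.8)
The plan is to follow the proof of Corollary~\ref{c:Mathias4} almost verbatim, replacing the crude bound on $\lVert e^{tA}\rVert_{2\to2}$ used there by a sharper one adapted to the Schur decomposition. From Corollary~\ref{c:Mathias3} and submultiplicativity of the operator norm we already have
\begin{equation*}
\lVert e^A-p(A)\rVert_{2\to2}\le\frac{\lVert\Omega(A)\rVert_{2\to2}}{n!}\max_{t\in[0,1]}e^{(1-t)\beta}\lVert e^{tA}\rVert_{2\to2},
\end{equation*}
so it suffices to estimate $\lVert e^{tA}\rVert_{2\to2}$ for $t\in[0,1]$. Since $Q$ is unitary, $\lVert e^{tA}\rVert_{2\to2}=\lVert Q^{-1}e^{tB}Q\rVert_{2\to2}=\lVert e^{tB}\rVert_{2\to2}$; and since the diagonal entries of $D$ are the eigenvalues of $A$, whose real parts do not exceed $\alpha$, we have $\lVert e^{sD}\rVert_{2\to2}\le e^{s\alpha}$ for every $s\ge0$.

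The key step is the refined inequality
\begin{equation*}
\lVert e^{tB}\rVert_{2\to2}\le e^{\alpha t}\sum_{j=0}^{n-1}\frac{(t\lVert N\rVert_{2\to2})^j}{j!},\qquad t\ge0,
\end{equation*}
which is the Schur-form sharpening of the estimate quoted in the proof of Corollary~\ref{c:Mathias4}. One way to obtain it is via the iterated Duhamel (Volterra) expansion
\begin{equation*}
e^{t(D+N)}=\sum_{k\ge0}\int_{0\le s_1\le\dots\le s_k\le t}e^{(t-s_k)D}Ne^{(s_k-s_{k-1})D}N\cdots Ne^{s_1D}\,ds_1\cdots ds_k.
\end{equation*}
The integrand of the $k$-th term is a product of $k$ copies of the strictly upper triangular matrix $N$ with diagonal factors interlaced; the diagonal factors do not change the location of the zero entries, and a product of $k$ strictly upper triangular $n\times n$ matrices vanishes whenever $k\ge n$, so every term with $k\ge n$ is zero and the series terminates at $k=n-1$. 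Using $\lVert e^{sD}\rVert_{2\to2}\le e^{s\alpha}$ for $s\ge0$, the telescoping identity $(t-s_k)+(s_k-s_{k-1})+\dots+s_1=t$, and the fact that the simplex $\{0\le s_1\le\dots\le s_k\le t\}$ has volume $t^k/k!$, the $k$-th term is bounded in norm by $e^{\alpha t}(t\lVert N\rVert_{2\to2})^k/k!$, and summation yields the stated inequality.

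It remains to assemble the pieces. Substituting the refined bound for $\lVert e^{tB}\rVert_{2\to2}$ into the inequality obtained from Corollary~\ref{c:Mathias3} gives
\begin{equation*}
\lVert e^A-p(A)\rVert_{2\to2}\le\frac{\lVert\Omega(A)\rVert_{2\to2}}{n!}\max_{t\in[0,1]}e^{(1-t)\beta+t\alpha}\sum_{j=0}^{n-1}\frac{(t\lVert N\rVert_{2\to2})^j}{j!}.
\end{equation*}
Since $\alpha\le\gamma$ and $\beta\le\gamma$, the convex combination obeys $(1-t)\beta+t\alpha\le\gamma$, so $e^{(1-t)\beta+t\alpha}\le e^\gamma$; and since $t\le1$ we have $(t\lVert N\rVert_{2\to2})^j\le\lVert N\rVert_{2\to2}^j$. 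Hence the maximum over $t\in[0,1]$ does not exceed $e^\gamma\sum_{j=0}^{n-1}\lVert N\rVert_{2\to2}^j/j!$, which is precisely the asserted estimate. The only genuine obstacle is establishing the Schur-form refinement $\lVert e^{tB}\rVert_{2\to2}\le e^{\alpha t}\sum_{j=0}^{n-1}(t\lVert N\rVert_{2\to2})^j/j!$; once it is available (either cited or proved as above), everything else is the same exponent bookkeeping as in Corollary~\ref{c:Mathias4}.
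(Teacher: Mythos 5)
Your argument is correct and follows the paper's own route: reduce via Corollary~\ref{c:Mathias3} and submultiplicativity to bounding $\lVert e^{tB}\rVert_{2\to2}$, then apply the estimate $\lVert e^{tB}\rVert_{2\to2}\le e^{\alpha t}\sum_{j=0}^{n-1}(t\lVert N\rVert_{2\to2})^j/j!$, which the paper merely cites from Van Loan while you supply a valid proof of it via the iterated Duhamel expansion and the nilpotency of products of $n$ strictly triangular factors. The remaining bookkeeping ($(1-t)\beta+t\alpha\le\gamma$ and $t^j\le1$) is exactly as in Corollary~\ref{c:Mathias4}, so the proposal is complete.
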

\begin{proof}
The proof is similar to that of Corollary~\ref{c:Mathias4} and based on the estimate~\cite{Van_Loan77}
\begin{equation*}
\lVert e^{At}\rVert=\lVert e^{Bt}\rVert\le e^{\alpha t}\sum_{k=0}^{n-1}\frac{\lVert Nt\rVert^k}{k!},\qquad t\ge0.\qed
\end{equation*}
\renewcommand\qed{}
\end{proof}

\begin{corollary}\label{c:Mathias5}
Let the assumptions of Theorem~\ref{t:Mathias2} be fulfilled and $f(z)=e^z$. Let the matrix $A$ be normal.
Then
\begin{equation*}
\lVert e^A-p(A)\rVert_{2\to2}\le e^{\gamma}\frac{\lVert\Omega(A)\rVert_{2\to2}}{n!}.
\end{equation*}
\end{corollary}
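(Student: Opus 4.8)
The plan is to specialize Corollary~\ref{c:Mathias3} to the spectral norm and then exploit the fact that every function of a normal matrix is again normal, so that the $2\to2$ norm coincides with the spectral radius. From Corollary~\ref{c:Mathias3} we start with
\[
\lVert e^A-p(A)\rVert_{2\to2}\le\frac1{n!}\max_{t\in[0,1]}e^{(1-t)\beta}\,\bigl\lVert\Omega(A)e^{tA}\bigr\rVert_{2\to2}.
\]
Writing $A=U^*\Lambda U$ with $U$ unitary and $\Lambda=\operatorname{diag}(\lambda_1,\dots,\lambda_n)$, $\lambda_j\in\sigma(A)$, we have $\Omega(A)e^{tA}=U^*\,\Omega(\Lambda)e^{t\Lambda}\,U$, which is normal; hence its $2\to2$ norm is the largest modulus of its eigenvalues, so that
\[
\bigl\lVert\Omega(A)e^{tA}\bigr\rVert_{2\to2}=\max_{\lambda\in\sigma(A)}\bigl|\Omega(\lambda)\bigr|\,e^{t\Real\lambda}\le e^{t\alpha}\max_{\lambda\in\sigma(A)}\bigl|\Omega(\lambda)\bigr|=e^{t\alpha}\,\lVert\Omega(A)\rVert_{2\to2},
\]
the last equality because $\Omega(A)$ is itself normal.

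Substituting this bound back, I would obtain
\[
\lVert e^A-p(A)\rVert_{2\to2}\le\frac{\lVert\Omega(A)\rVert_{2\to2}}{n!}\max_{t\in[0,1]}e^{(1-t)\beta+t\alpha}.
\]
Since the exponent $(1-t)\beta+t\alpha$ is an affine function of $t$, its maximum over $[0,1]$ is attained at an endpoint and equals $\max\{\beta,\alpha\}=\gamma$; this gives the asserted inequality.

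I do not expect a genuine obstacle here: the only step needing a word of justification is the identity $\lVert g(A)\rVert_{2\to2}=\max_{\lambda\in\sigma(A)}|g(\lambda)|$ for $g$ a function of the normal matrix $A$, which is immediate from the unitary diagonalization above. As an alternative route, the corollary also follows at once from Corollary~\ref{c:Mathias6}: a normal matrix in triangular Schur form is diagonal, so $N=0$ and the sum $\sum_{j=0}^{n-1}\lVert N\rVert_{2\to2}^j/j!$ collapses to its $j=0$ term, namely $1$.
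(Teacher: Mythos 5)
Your proposal is correct, and your closing remark (normal $\Rightarrow$ $N=0$ in the Schur form, so the sum in Corollary~\ref{c:Mathias6} collapses to $1$) is precisely the paper's entire proof. Your main argument, however, takes a genuinely different and more self-contained route: starting from Corollary~\ref{c:Mathias3}, you use the unitary diagonalization $A=U^*\Lambda U$ to see that $\Omega(A)e^{tA}$ is normal, so its $2\to2$ norm is $\max_{\lambda\in\sigma(A)}|\Omega(\lambda)|e^{t\Real\lambda}\le e^{t\alpha}\lVert\Omega(A)\rVert_{2\to2}$, and then observe that the affine exponent $(1-t)\beta+t\alpha$ is maximized at an endpoint, giving $e^{\gamma}$. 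All steps check out. What your route buys is independence from Van Loan's estimate $\lVert e^{Bt}\rVert\le e^{\alpha t}\sum_{k}\lVert Nt\rVert^k/k!$, on which Corollary~\ref{c:Mathias6} (and hence the paper's one-line proof) rests; everything reduces to the elementary fact that the $2\to2$ norm of a normal matrix is its spectral radius. What the paper's route buys is brevity and uniformity: Corollaries~\ref{c:Mathias4}, \ref{c:Mathias6} and \ref{c:Mathias5} are presented as successive specializations of a single chain of estimates. One cosmetic caveat: you write $\Lambda=\mathrm{diag}(\lambda_1,\dots,\lambda_n)$, reusing $n$ both for the matrix size and for the number of interpolation points (the paper itself is guilty of the same conflation in Corollaries~\ref{c:Mathias4} and~\ref{c:Mathias6}); the argument is unaffected, but it is worth using a separate symbol for the dimension.
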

\begin{proof}
For the normal matrix $A$, we have $N=0$. Therefore the proof follows from Corollary~\ref{c:Mathias6}.
\end{proof}

Let us discuss whether the estimate is close to real accuracy.

\begin{example}\label{ex:Cayley-Hamilton theorem}
Let the points of interpolation $z_1$, \dots, $z_{n}$ be taken coinciding with the points of the spectrum of $A$ (counted according to their algebraic multiplicities). Then $\Omega$ is the characteristic polynomial of a matrix $A$. By the Cayley--Hamilton theorem, $\Omega(A)=0$. Thus, in this case, Theorem~\ref{t:Mathias2} implies the well-known identity $p(A)=f(A)$. Similarly, if $\Omega$ and its derivatives are small on the spectrum of $A$, the factor $\Omega(A)$ is also small.
\end{example}

\begin{example}\label{ex:Chebyshev}
Let $A$ be a Hermitian matrix with the spectrum lying in $[-1,1]$. Let the points of interpolation be the zeroes of the Chebyshev poly\-nomial of the first kind~\cite[\S~3.3]{Davis_PhJ} of degree $n$ on $[-1,1]$. In this case, $\Omega$ is this Chebyshev polynomial; if its leading coefficient is 1, then the maximal absolute value of $\Omega$ on $[-1,1]$ is $\frac1{2^{n-1}}$. Therefore, Corollary~\ref{c:Mathias5} implies that
\begin{equation*}
\lVert e^A-p(A)\rVert
\le\frac{1}{n!}\lVert\Omega(A)\rVert e^{\gamma}
\le\frac1{n!}\frac e{2^{n-1}}.
\end{equation*}
If the spectrum of $A$ is not known exactly, the sharp estimate (for this polynomial) is
\begin{equation*}
\lVert e^A-p(A)\rVert\le\max_{\lambda\in[0,1]}|e^\lambda-p(\lambda)|.
\end{equation*}
We compare these two estimates for $n=10$: we have $\frac1{n!}\frac e{2^{n-1}}=1.46\cdot10^{-9}$ and
$\max_{\lambda\in[0,1]}|e^\lambda-p(\lambda)|=0.60\cdot10^{-9}$. The comparison shows that the estimate from Corollary~\ref{c:Mathias5} is rather close to sharp one.
\end{example}

\section{Numerical experiment}\label{s:experiment}
Theorem~\ref{t:Mathias2} can help to estimate whether the accuracy of the approximation of the matrix function $f(A)$ by a matrix polynomial $p(A)$ is good enough for given points of interpolation. We give an example of such a verification based on Corollary~\ref{c:Mathias3}.

We put $N=1024$. We take complex numbers $d_i$, $i=1,\dots,N$, uniformly distributed in $[-1,0]\times [-i\pi,i\pi]$. We consider the diagonal matrix $D$ of the size $N\times N$ with the diagonal entries $d_i$. We take a matrix $T$, whose entries are random numbers uniformly distributed in $[-1,1]$. Then, we consider the matrix $A=TDT^{-1}$. Clearly, $\sigma(A)$ consists of the numbers $d_i$. We interpret $A$ as a random matrix whose spectrum is contained in the rectangle $[-1,0]\times [-i\pi,i\pi]$. In Fig.~\ref{f:cir} we show an example of the spectrum of such a matrix.

For $t\in[0,1]$, we take as the sharp matrix $e^{tA}$ the matrix $Te^{tD}T^{-1}$.

We take the following 16 points as interpolation points:
\begin{equation*}
0,\pm i\pi, \pm i\pi/2, \pm3 i\pi/4; -1, -1\pm i\pi, -1\pm i\pi/2, -1\pm3 i\pi/4; -1/2\pm i\pi.
\end{equation*}
They are marked in Fig.~\ref{f:cir} by the sign $\otimes$.
These points are chosen heuristically. We calculate the interpolation polynomial $p$ and the polynomial $\Omega$, which correspond to these points, and substitute the matrix $A$ into them.

Next we calculate $\lVert\Omega(A)e^{tA}\rVert_{2\to 2}=\lVert\Omega(A)Te^{tD}T^{-1}\rVert_{2\to 2}$ for $t=0.01k$, where $k=0,\dots,100$, and take the maximum of these numbers as an approximate value of $\max_{t\in[0,1]}e^{(1-t)\beta}\lVert\Omega(A) e^{tA}\rVert$ (we put $\beta=0$). Finally, we divide the result by $16!$ and, thus, obtain the estimate from Corollary~\ref{c:Mathias3}; we denote it by $e_1$. We also calculate the true accuracy $e_0=\lVert e^{A}-p(A)\rVert_{2\to 2}$ and the condition number $\varkappa(T)=\lVert T\rVert_{2\to 2}\cdot\lVert T^{-1}\rVert_{2\to 2}$.

We repeated the described experiment 100 times. After that, we excluded 3 results when $\varkappa(T)>10^{5}$. Finally, we calculated the average values. They are as follows:
average $e_0$ is $9.36\cdot10^{-6}$ with the standard deviation $1.37\cdot10^{-5}$,
average $e_1$ is $2.57\cdot10^{-5}$ with the standard deviation $3.33\cdot10^{-5}$,
average $e_1/e_0$ is $3.03$ with the standard deviation $1.11$,
average $\varkappa(T)$ is $1.84\cdot10^5$ with the standard deviation $2.44\cdot10^5$.

The average value $3.03$ of $e_1/e_0$ shows that the estimate is rather close to the true value. So, we can assume that for not very bad matrices $A$ of the size $N\times N$ with the spectrum in the rectangle $[-1,0]\times [-i\pi,i\pi]$, the interpolation polynomial with the considered interpolation points usually approaches $e^A$ with accuracy about $3\cdot10^{-5}$.

\begin{figure}[htb]
\begin{center}
\includegraphics[width=\textwidth]{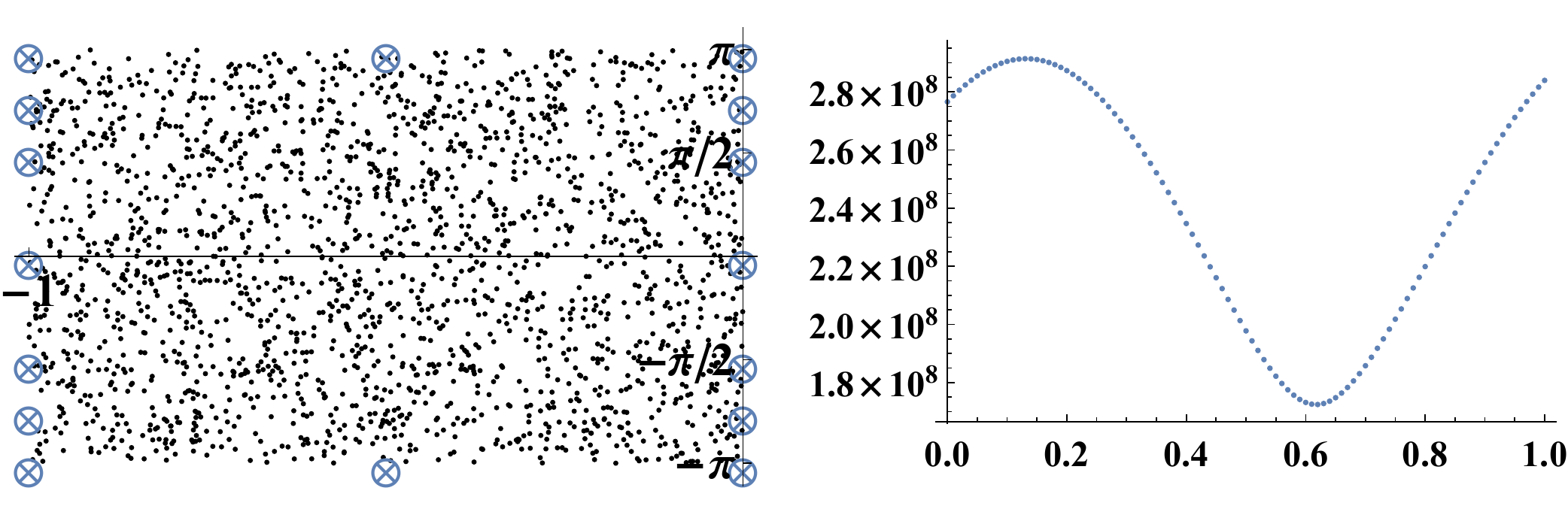}
\caption{Left: the spectrum of $A$ and the points of interpolation; right: the norms $\lVert\Omega(A)e^{tA}\rVert$ for $t=0.01k$, $k=0,\dots,100$}\label{f:cir}
\end{center}
\end{figure}

\section*{Acknowledgements}\label{s:Acknowledgements}
The first author was supported by the Ministry of Education and Science of the Russian Federation under state order No.~3.1761.2017/4.6.
The second author was supported by the Russian Foundation for Basic Research under research project No. 19-01-00732 А.

\providecommand{\bysame}{\leavevmode\hbox to3em{\hrulefill}\thinspace}
\providecommand{\MR}{\relax\ifhmode\unskip\space\fi MR }
\providecommand{\MRhref}[2]{%
  \href{http://www.ams.org/mathscinet-getitem?mr=#1}{#2}
}
\providecommand{\href}[2]{#2}

\end{document}